\documentclass[11pt]{35}
\usepackage{amsthm, amsmath, amssymb, amsbsy, amsfonts, latexsym, euscript}
\usepackage{calrsfs} %kalligraphiya
\usepackage{graphicx}

\DeclareMathAlphabet{\varmathbb}{U}{pxsyb}{m}{n}

\def\leq{\leqslant}

\def\geq{\geqslant}
\def\phi{\varphi}

\def\bar{\overline}

\def\kappa{\varkappa}

\newcommand{\D}{\mathrm{d}\kern0.2pt}%
\newcommand{\E}{\mathrm{e}\kern0.2pt} 
\newcommand{\ii}{\kern0.05em\mathrm{i}\kern0.05em}

\newtheorem{theorem}{\bf \indent Theorem}[section]
\newtheorem{proposition}{\bf \indent Proposition}[section]

\newtheorem{corollary}{\bf \indent Corollary}[section]

\theoremstyle{remark} 
\newtheorem{remark}{\bf \indent Remark}[section]

\numberwithin{equation}{section}

\setcounter{equation}{0}

\begin{document}

\noindent {\Large \bf On relations between harmonic functions and solutions \\[3pt]
of the modified Helmholtz equation}

\vskip2mm

{\bf Nikolay Kuznetsov}

\vskip-2pt {\small Laboratory for Mathematical Modelling of Wave Phenomena}
\vskip-4pt {\small Institute for Problems in Mechanical Engineering} \vskip-4pt
{\small Russian Academy of Sciences} \vskip-4pt {\small V.O., Bol'shoy pr. 61, St
Petersburg, 199178} \vskip-4pt {\small Russian Federation} \vskip-4pt {\small
nikolay.g.kuznetsov@gmail.com}

\vspace{2mm}

\begin{quote}
\noindent We consider the $m$-dimensional modified Helmholtz equation and establish
two relations between its solutions in a bounded domain and harmonic functions. Both
relations essentially rely on properties of the Newtonian potential. Some other
characteristics of these solutions are also obtained.
\end{quote}

\vspace{-4mm}

{\centering \section{Introduction} }

\noindent In the present note, we consider real-valued $C^2$-solutions of the $m$-dimensional
modified Helmholtz equation:
\begin{equation}
\nabla^2 u - \mu^2 u = 0 , \quad \mu \in {\bf R} \setminus \{0\} ,
\label{Hh}
\end{equation}
$\nabla = (\partial_1, \dots , \partial_m)$ is the gradient operator, $\partial_i =
\partial / \partial x_i$. Unfortunately, it is not com\-monly known that these 
solutions are called panharmonic functions (or $\mu$-pan\-har\-monic functions) by
analogy with solutions of the Laplace equation. This convenient abbreviation coined
by Duffin \cite{D} will be used in what follows.
 
Undeservedly, panharmonic functions received much less attention than harmonic and
subharmonic. Recently, the author published several papers aimed at filling in this
gap; see \cite{Ku1}, \cite{Ku2}, \cite{Ku3}, \cite{Ku4} and \cite{Ku5}. In
particular, the following results about panharmonic functions are obtained in
\cite{Ku1} (some also formulated without proof in \cite{Ku4}): the mean value
formulae for spheres and balls (see Theorem~1.1 and Corollary~1.2, respectively,
formulated below), several corollaries of these formulae include the Liouville
theorem and the strong converse of Theorem 1.1. In the brief note \cite{Ku2},
asymptotic mean value properties of panharmonic functions are considered, whereas
the notes \cite{Ku3} and \cite{Ku5} deal with characterizations of balls via these
functions.

The aim of this paper is to describe relations between harmonic and panharmonic
functions and the motivation for this is the fundamental theorem on subharmonic
functions (published by F.~Riesz \cite{R} in 1930), which establishes the
decomposition of such a function into the sum of a harmonic function and a Newtonian
potential. (The result was proved by Riesz for functions of two variables, whereas
the general case can be found in \cite{HK}, Section~3.5.) It occurs that panharmonic
functions have many common properties with subharmonic ones and this is the reason
to apply methods developed for subharmonic functions in studies of panharmonic
ones.

To demonstrate the similarity between panharmonic and subharmonic functions we use
the mean value property. The following analogue of the Gauss theorem on the arithmetic
mean of a harmonic function over an $m-1$-dimensional sphere in ${\bf R}^m$ (the
original memoir \cite{G}, Article~20, deals with the case $m=3$) was recently
obtained for panharmonic functions; see \cite{Ku1}.

\begin{theorem}
Let $u$ be panharmonic in a domain $D \subset {\bf R}^m$, $m \geq 2$. Then for
every $x \in D$ the identity
\begin{equation}
M^\circ (x, r, u) =  a (\mu r) \, u (x) , \quad a^\circ (\mu r) = \Gamma \left(
\frac{m}{2} \right) \frac{I_{(m-2)/2} (\mu r)}{(\mu r / 2)^{(m-2)/2}} \, ,
\label{MM}
\end{equation}
holds for each admissible sphere $S_r (x);$ $I_\nu$ denotes the modified Bessel
function of order $\nu$.
\end{theorem}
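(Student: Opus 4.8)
The plan is to fix the centre $x\in D$, regard the spherical mean $M(r):=M^\circ(x,r,u)$ as a function of the radius $r$ alone, derive a second‑order ordinary differential equation for it, and recognise that equation as a transform of the modified Bessel equation; normalising the solution then yields \eqref{MM}. Throughout, $R$ denotes the distance from $x$ to $\partial D$, so that the sphere $S_r(x)$ is admissible (that is, $\overline{B_r(x)}\subset D$) precisely for $0<r<R$, and $\omega$ stands for the surface area of the unit sphere in ${\bf R}^m$.

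First I would write $M(r)=\omega^{-1}\int_{|\xi|=1}u(x+r\xi)\,\D S_\xi$. Since $u\in C^2(D)$, this is a $C^2$ function of $r\in[0,R)$ with $M(0)=u(x)$. Differentiating under the integral sign and converting the result back into an integral over $S_r(x)$ gives $M'(r)=(\omega r^{m-1})^{-1}\int_{S_r(x)}\partial_n u\,\D S$, and the divergence theorem identifies the right‑hand side with $(\omega r^{m-1})^{-1}\int_{B_r(x)}\nabla^2u\,\D y$. Now equation \eqref{Hh} enters: $\nabla^2u=\mu^2u$, and expressing the ball integral in polar coordinates about $x$ yields
\begin{equation*}
r^{m-1}M'(r)=\mu^2\int_0^r\rho^{m-1}M(\rho)\,\D\rho,\qquad 0<r<R .
\end{equation*}
Differentiating this relation once more produces
\begin{equation*}
M''(r)+\frac{m-1}{r}\,M'(r)-\mu^2M(r)=0,\qquad 0<r<R,
\end{equation*}
together with the conditions $M(0)=u(x)$ and $M'(0)=0$, the latter read off from the integral relation as $r\to 0$.

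Next I would solve this singular initial‑value problem. The substitution $M(r)=(\mu r)^{-(m-2)/2}w(\mu r)$ turns the last display into the modified Bessel equation of order $\nu=(m-2)/2$ for $w$, whose solutions are spanned by $I_\nu(\mu r)$ and $K_\nu(\mu r)$ (for $m=2$, $\nu=0$, the second solution $K_0$ carrying a logarithmic singularity at the origin). The branch built from $K_\nu$ is unbounded as $r\to 0^+$, so boundedness of $M$ near the centre — automatic, since $M(0)=u(x)$ — forces $M(r)=C\,(\mu r)^{-(m-2)/2}I_{(m-2)/2}(\mu r)$ for a constant $C$. Comparing this with the small‑argument asymptotics $I_\nu(z)\sim (z/2)^\nu/\Gamma(\nu+1)$ as $z\to0^+$ and imposing $M(0)=u(x)$ fixes $C$ and reproduces exactly $M(r)=a^\circ(\mu r)\,u(x)$, which is the asserted identity \eqref{MM}.

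The individual steps are routine: differentiation under the integral sign and the divergence theorem are legitimate because $u\in C^2$, and the reduction to Bessel's equation is a standard change of variable. The one place that calls for genuine care is the local analysis of the ODE at the singular point $r=0$ — in particular the separate treatment of $m=2$, where the second fundamental solution involves $\log r$ — since it is this analysis that selects the $I_\nu$‑branch and thereby pins down $C$. For completeness I would also mention an alternative derivation: applying Green's second identity on $B_r(x)$ to $u$ and to the radial function $y\mapsto a^\circ(\mu|y-x|)$, which is itself a solution of \eqref{Hh} and is regular at $x$, makes the bulk term vanish (both functions satisfy \eqref{Hh}) and leaves a boundary relation on $S_r(x)$ equivalent to $M'(r)/M(r)=\frac{\D}{\D r}\log a^\circ(\mu r)$; integrating and letting $r\to0$ gives the same conclusion, at the cost of first checking that the radial function above is panharmonic.
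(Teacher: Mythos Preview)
Your argument is correct. The derivation of the ODE for the spherical mean via the divergence theorem, the reduction to the modified Bessel equation by the substitution $M(r)=(\mu r)^{-(m-2)/2}w(\mu r)$, and the selection of the $I_\nu$-branch by the regularity condition at $r=0$ are all carried out properly, and the normalisation via $I_\nu(z)\sim (z/2)^\nu/\Gamma(\nu+1)$ yields exactly the coefficient $a^\circ(\mu r)$.

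Note, however, that the paper you are comparing against does \emph{not} prove this theorem: it merely states it as Theorem~1.1 and refers to \cite{Ku1} for the proof (with historical credit to C.~Neumann for $m=3$ and to Duffin for $m=2$). So there is no ``paper's own proof'' to compare with here. Your approach is the classical one --- essentially the method attributed to Neumann and Duffin in the paper's discussion --- and is almost certainly what appears in \cite{Ku1}. The alternative route you sketch at the end (Green's second identity applied to $u$ and the radial panharmonic $y\mapsto a^\circ(\mu|y-x|)$) is also standard and equivalent; it trades the explicit ODE derivation for a verification that the candidate radial function is panharmonic, which is just the same Bessel computation in reverse.
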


Here and below the following notation and terminology are used. The open ball of
radius $r$ centred at $x$ is denoted by $B_r (x) = \{ y : |y-x| < r \}$; the latter
is called admissible with respect to a domain $D$ provided $\overline{B_r (x)}
\subset D$, and $S_r (x) = \partial B_r (x)$ is the corresponding admissible sphere.
If $u \in C^0 (D)$, then its spherical mean value over $S_r (x) \subset D$ is
\begin{equation}
M^\circ (x, r, u) = \frac{1}{|S_r|} \int_{S_r (x)} u (y) \, \D S_y =
\frac{1}{\omega_m} \int_{S_1 (0)} u (x +r y) \, \D S_y \, , \label{sm}
\end{equation}
where $|S_r| = \omega_m r^{m-1}$ and $\omega_m = 2 \, \pi^{m/2} / \Gamma (m/2)$ is
the total area of the unit sphere (as usual $\Gamma$ stands for the Gamma function),
and $\D S$ is the surface area measure. For $m=3$ identity \eqref{MM} (derived by
C.~Neumann \cite{NC} as early as 1896) is particularly simple because $a^\circ (\mu
r) = \sinh \mu r / (\mu r)$. Duffin independently rediscovered the proof (see
\cite{D}, pp.~111-112), but in two dimensions with $a^\circ (\mu r) = I_0 (\mu r)$.

It is easy to demonstrate that the function $a^\circ$ increases monotonically on
$[0, \infty)$ from $a^\circ (0) = 1$ to infinity. Indeed, the relation $[z^{-\nu}
I_\nu (z)]' = z^{-\nu} I_{\nu+1} (z)$ (see \cite{Wa}, p. 79), where the right-hand
side is positive for $z > 0$ and vanishes at $z = 0$, implies the monotonicity,
whereas the behavior at infinity is a consequence of the asymptotic formula
\begin{equation*}
I_\nu (z) = \frac{\E^z}{\sqrt{2 \pi z}} \left[ 1 + O (|z|^{-1}) \right] \, , \ \
|\arg z| < \pi /2 ,
\end{equation*}
valid as $|z| \to \infty$ (see \cite{Wa}, p. 80). Therefore, $a^\circ (\mu r) > 1$
for every admissible sphere $S_r (x)$, and so \eqref{MM} yields that $u (x) \leq
M^\circ (x, r, u)$ provided the panharmonic $u$ is nonnegative in~$D$. This implies
the following.

\begin{corollary}
Let a panharmonic function $u$ be nonnegative (nonpositive) in a domain $D$. Then
$u$ is subharmonic (superharmonic) in $D$.
\end{corollary}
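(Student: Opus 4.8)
The plan is to read the statement off directly from the mean value identity \eqref{MM} together with the monotonicity of $a^\circ$ already established in the excerpt. First I would recall the standard $C^0$-characterization of subharmonicity: a function $v \in C^0 (D)$ is subharmonic in $D$ if and only if it satisfies the sub-mean value inequality $v (x) \le M^\circ (x, r, v)$ for every admissible sphere $S_r (x) \subset D$ (upper semicontinuity, which is part of the general definition, is automatic here since $u \in C^2 (D)$). Thus the whole task reduces to verifying this single inequality; an analogous characterization with the reversed inequality defines superharmonicity.

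Next, assume $u$ is panharmonic and nonnegative in $D$. Fix an arbitrary $x \in D$ and any admissible sphere $S_r (x)$. By Theorem~1.1 we have $M^\circ (x, r, u) = a^\circ (\mu r) \, u (x)$, and the excerpt shows $a^\circ (\mu r) > 1$ for $r > 0$. Multiplying $a^\circ (\mu r) \ge 1$ by $u (x) \ge 0$ gives $a^\circ (\mu r) \, u (x) \ge u (x)$, that is, $M^\circ (x, r, u) \ge u (x)$. Since $x$ and $S_r (x)$ were arbitrary, $u$ obeys the sub-mean value inequality over every admissible sphere and is therefore subharmonic in $D$.

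For the nonpositive case I would simply apply the previous step to $-u$, which is a nonnegative panharmonic function in $D$ because equation \eqref{Hh} is linear; hence $-u$ is subharmonic and $u$ is superharmonic. Alternatively one repeats the computation directly: from $u (x) \le 0$ and $a^\circ (\mu r) \ge 1$ one gets $a^\circ (\mu r) \, u (x) \le u (x)$, i.e. $M^\circ (x, r, u) \le u (x)$, the super-mean value inequality. There is no genuine obstacle here; the only point requiring care is to invoke the correct characterization of (sub/super)harmonicity in terms of spherical means over \emph{admissible} spheres, so that the pointwise identity \eqref{MM} can be used verbatim, and to note that $C^2$-regularity makes the semicontinuity hypothesis trivially satisfied.
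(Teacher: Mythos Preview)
Your proof is correct and essentially coincides with the paper's: both derive the sub-mean value inequality $u(x)\le M^\circ(x,r,u)$ from \eqref{MM} and $a^\circ(\mu r)>1$, and then invoke a standard characterization of subharmonicity. The only cosmetic difference is that the paper cites the comparison-with-harmonic-functions definition from Gilbarg--Trudinger rather than the sub-mean value criterion you use, but these are equivalent for $C^0$-functions and the argument is otherwise identical.
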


The result immediately follows from the last inequality and the definition of
subharmonic (superharmonic) functions used by Gilbarg and Trudinger (see \cite{GT},
p.~23), which is sufficient in our context. Namely, a function $u \in C^0 (D)$ is
called subharmonic (superharmonic) in $D$ if for every admissible ball $B$ and every
function $h$ harmonic in $B$ and satisfying $u \leq h$ ($u \geq h$) on $\partial B$,
the same inequality holds throughout $\bar B$.

The converse of Corollary 1.1 is not true, because any nonzero constant is
subharmonic and superharmonic, but not panharmonic. Furthermore, the validity of the
Riesz decomposition theorem and some other results for nonnegative panharmonic
functions is a consequence of this corollary; see the precise formulations in
Section~2.
 
Another consequence of Theorem 1.1 is the following assertion (see \cite{Ku1},
Corollary~2.1).

\begin{corollary}
Let $D$ be a domain in ${\bf R}^m$, $m \geq 2$. If $u$ is panharmonic in $D$, then
\begin{equation}
M^\bullet (x, r, u) =  a^\bullet (\mu r) \, u (x) , \quad a^\bullet (\mu r) =
\Gamma \left( \frac{m}{2} + 1 \right) \frac{I_{m/2} (\mu r)}{(\mu r / 2)^{m/2}} \, ,
\label{MM'}
\end{equation}
for every admissible ball $B_r (x)$.
\end{corollary}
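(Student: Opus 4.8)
The plan is to derive \eqref{MM'} from Theorem 1.1 by integrating the spherical mean value identity in the radial variable. First I would express the solid mean over the admissible ball as an iterated integral in polar coordinates centred at $x$,
\[
M^\bullet (x, r, u) = \frac{1}{|B_r|} \int_{B_r (x)} u (y) \, \D y = \frac{1}{|B_r|} \int_0^r |S_\rho| \, M^\circ (x, \rho, u) \, \D \rho ,
\]
which is legitimate since $u \in C^0 (D)$ and $\overline{B_r (x)} \subset D$. Every sphere $S_\rho (x)$ with $0 < \rho \le r$ is then admissible, so Theorem 1.1 gives $M^\circ (x, \rho, u) = a^\circ (\mu \rho) \, u (x)$; substituting this together with $|S_\rho| = \omega_m \rho^{m-1}$ and $|B_r| = \omega_m r^m / m$ reduces the assertion to the analytic identity
\[
\frac{m}{r^m} \int_0^r \rho^{m-1} a^\circ (\mu \rho) \, \D \rho = a^\bullet (\mu r) .
\]

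Next I would evaluate this integral. Writing $\nu = (m-2)/2$ and inserting the formula for $a^\circ$ from \eqref{MM}, one finds $\rho^{m-1} a^\circ (\mu \rho) = \Gamma (m/2) \, (\mu/2)^{-\nu} \rho^{\nu + 1} I_\nu (\mu \rho)$, using $m - 1 - \nu = \nu + 1 = m/2$. The required antiderivative is the companion of the differentiation formula quoted after Theorem 1.1: from $[z^\nu I_\nu (z)]' = z^\nu I_{\nu - 1} (z)$ (see \cite{Wa}, p.~79) one gets $\int_0^z t^{\nu + 1} I_\nu (t) \, \D t = z^{\nu + 1} I_{\nu + 1} (z)$, the boundary term at $0$ vanishing because $\nu + 1 > 0$. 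After the substitution $t = \mu \rho$ this yields $\int_0^r \rho^{\nu + 1} I_\nu (\mu \rho) \, \D \rho = \mu^{-m/2 - 1} (\mu r)^{m/2} I_{m/2} (\mu r)$.

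Finally I would collect the constants. Multiplying by $m / r^m$ and by $\Gamma (m/2) (\mu/2)^{-\nu}$ and simplifying the powers of $2$, $\mu$ and $r$, the left-hand side becomes $m \, \Gamma (m/2) \, 2^{(m-2)/2} (\mu r)^{-m/2} I_{m/2} (\mu r)$; comparison with $a^\bullet (\mu r) = \Gamma (m/2 + 1) \, 2^{m/2} (\mu r)^{-m/2} I_{m/2} (\mu r)$ from \eqref{MM'} reduces everything to $m \, \Gamma (m/2) \, 2^{(m-2)/2} = \Gamma (m/2 + 1) \, 2^{m/2}$, which is immediate from $\Gamma (m/2 + 1) = (m/2) \, \Gamma (m/2)$. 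I expect no genuine obstacle: the only points needing care are the Bessel index shift $\nu \mapsto \nu + 1$ and the bookkeeping of the several powers, while the interchange of the order of integration and the admissibility of the intermediate spheres are routine.
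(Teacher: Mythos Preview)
Your argument is correct and is precisely the approach the paper indicates: the corollary is presented as a direct consequence of Theorem~1.1 (with the details deferred to \cite{Ku1}, Corollary~2.1), and deriving the ball identity by radially integrating the spherical identity \eqref{MM} and invoking the Bessel relation $[z^{\nu+1} I_{\nu+1}(z)]' = z^{\nu+1} I_\nu(z)$ is exactly that computation. Your bookkeeping of the powers and the final use of $\Gamma(m/2+1) = (m/2)\,\Gamma(m/2)$ check out.
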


Here $M^\bullet (x, r, f) = |B_r|^{-1} \int_{B_r (x)} f (y) \, \D y$ is the mean
over the ball $B_r (x)$, whose volume is $|B_r| = m^{-1} \omega_m r^m$. Like
$a^\circ$, the function $a^\bullet$ increases monotonically on $[0, \infty)$ from
$a^\bullet (0) = 1$ to infinity.

Combining \eqref{MM} and \eqref{MM'}, one arrives at the identity
\begin{equation} 
a^\circ (\mu r) M^\bullet (x, r, u) = a^\bullet (\mu r) M^\circ (x, r, u) \, ,
\label{MMtil}
\end{equation}
which couples the mean values over spheres and balls for a panharmonic $u$. In~this
identity, the ratio of coefficients $a^\circ (\mu r) / a^\bullet (\mu r)$ tends to
unity in the limit as $\mu \to 0$, thus reducing \eqref{MMtil} to the well-known
formula equating the mean values of a har\-monic function over spheres and balls. On
the other hand, it is long known that if the latter mean values are equal for an
arbitrary continuous function and all admissible balls, then the function is
harmonic; see \cite{BR}, where this assertion was proven in two dimensions, and
\cite{Ku}, Theorem~1.8, for the general case.

It occurs that, under suitable assumptions, identity \eqref{MMtil} implies that $u$
is panharmonic (see Theorem 3.2 below; it was announced without proof in \cite{Ku4})
like the mentioned assertion for harmonic functions.

{\centering \section{Properties of positive panharmonic functions} }

It is worth to recall the Riesz decomposition theorem for subharmonic functions
(see, for example, \cite{HK}, Theorem~3.9), which involves the fundamental solution
$E_m (x - y)$ of the Laplace equation equal to $[ (2 - m) \, \omega_m |x -
y|^{(m-2)} ]^{-1}$ when $m \geq 3$ and to $(2 \pi)^{-1} \log |x - y|$ when $m=2$.

\begin{theorem}
If $u$ is subharmonic in a domain $D \subset {\bf R}^m$, $m \geq 2$, then there
exists a unique Borel measure $\mathbf m$ in $D$ such that for any compact set $K
\subset D$
\begin{equation}
u (x) = \int_K E_m (x - y) \, \D \mathbf m (y) + h (x) , \quad x \in \mathrm{int} K
, \label{Ri}
\end{equation}
where $\mathrm{int} K$ is the interior of $K$ and $h$ is harmonic there.
\end{theorem}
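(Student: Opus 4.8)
The plan is to identify the distributional Laplacian of $u$ with a nonnegative Borel measure and then subtract off its Newtonian potential, reading off $h$ as the remainder. First I would pass from the comparison definition of subharmonicity stated above to the sub-mean value inequality $u(x) \le M^\circ (x, r, u)$, valid for every admissible sphere $S_r (x) \subset D$; for $u \in C^0 (D)$ the two are equivalent. Mollifying $u$ with a standard radial approximate identity $\rho_\varepsilon$ produces $u_\varepsilon := u * \rho_\varepsilon \in C^\infty$ that is again subharmonic, whence $\nabla^2 u_\varepsilon \ge 0$ pointwise, while $u_\varepsilon \to u$ locally uniformly as $\varepsilon \to 0^+$. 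Letting $\varepsilon \to 0$ in the sense of distributions shows $\nabla^2 u \ge 0$ in $\mathcal{D}' (D)$; by the Riesz--Markov theorem this distribution is represented by a unique nonnegative Borel measure $\mathbf m$ on $D$, finite on compact subsets, with $\nabla^2 u = \mathbf m$. Since $\mathbf m$ is intrinsic to $u$, uniqueness will come for free at the end.

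Now fix a compact $K \subset D$ and set $P (x) := \int_K E_m (x - y) \, \D \mathbf m (y)$. Because $E_m$ is locally integrable and $\mathbf m (K) < \infty$, $P$ is locally integrable on ${\bf R}^m$ and defines a distribution there. Using that $E_m$ is the fundamental solution, i.e.\ $\nabla^2 E_m = \delta_0$, together with Fubini's theorem in the pairing of $P$ with a test function supported in $\mathrm{int}\, K$, one obtains $\nabla^2 P = \mathbf m|_{K}$ in $\mathcal{D}' ({\bf R}^m)$; in particular $\nabla^2 P = \mathbf m$ throughout $\mathrm{int}\, K$. Consequently $h := u - P$ satisfies $\nabla^2 h = \mathbf m - \mathbf m = 0$ in $\mathcal{D}' (\mathrm{int}\, K)$, and Weyl's lemma (hypoellipticity of the Laplacian) shows that $h$ coincides almost everywhere with a function harmonic in $\mathrm{int}\, K$. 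Identifying $h$ with this harmonic representative, the equality $u = P + h$ holds pointwise on $\mathrm{int}\, K$ --- this last step being routine, e.g.\ from the semicontinuity of $u$ and $P$ --- which is exactly \eqref{Ri}.

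For uniqueness, suppose \eqref{Ri} also holds with a measure $\mathbf m'$ and a function $h'$ harmonic in $\mathrm{int}\, K$. Applying $\nabla^2$ and using $\nabla^2 E_m = \delta_0$ together with $\nabla^2 h' = 0$ gives $\mathbf m' = \nabla^2 u = \mathbf m$ on $\mathrm{int}\, K$; letting $K$ exhaust $D$ yields $\mathbf m' = \mathbf m$ on all of $D$.

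The step I expect to require the most care is the identity $\nabla^2 P = \mathbf m|_{K}$: interchanging the distributional Laplacian with integration against the singular, only locally integrable kernel $E_m$ must be justified (by Fubini on the test-function pairing, or by approximating $E_m$ with smooth kernels), and one must verify that $\partial K$ contributes nothing, so that inside $\mathrm{int}\, K$ the potential $P$ accounts for all of $\mathbf m|_{K}$ and none of $\mathbf m|_{D \setminus K}$ --- the latter being harmonic there and hence absorbed into $h$.
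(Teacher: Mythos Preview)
The paper does not actually prove this theorem: it is quoted as the classical Riesz decomposition and attributed to \cite{HK}, Theorem~3.9, with no argument given. Your outline is the standard textbook route---identify $\nabla^2 u$ as a nonnegative Radon measure via mollification and positivity of distributions, subtract the Newtonian potential of $\mathbf m|_K$, and invoke Weyl's lemma for the remainder---and it is essentially correct. Two small remarks: in the paper's setting $u\in C^0(D)$, so you may simply say $u$ is continuous rather than appealing to semicontinuity; and your closing worry about $\partial K$ is harmless, since test functions supported in $\mathrm{int}\,K$ never see $\partial K$, so $\nabla^2 P=\mathbf m$ and $\nabla^2 u=\mathbf m$ automatically agree there.
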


\begin{remark}
It follows from Treves's considerations (see \cite{T}, pp.~288--289) that if $u \geq
0$ is subharmonic in a bounded domain $D$, then formula \eqref{Ri} holds with $K$
changed to $D$, whereas $\D \mathbf m (y) = \nabla^2 u (y) \, \D y$ and $h$ is the
positive least harmonic majorant of $u$ in~$D$; for its definition see also
\cite{AG}, p.~79.
\end{remark}

Now we are in a position to formulate the following.

\begin{theorem}
Let $u \geq 0$ be $\mu$-panharmonic in a domain $D \subset {\bf R}^m$, $m \geq 2$,
then \eqref{Ri} takes the following form:
\begin{equation}
h (x) = u (x) - \mu^2 \int_K E_m (x - y) \, u (y) \, \D y , \quad x \in \mathrm{int}
K . \label{Rie}
\end{equation}
Here $K \subset D$ is a compact set and $h$ is harmonic in $\mathrm{int} K$.

If $D$ is bounded and, besides, $u \in C^0 (\bar D)$, then \eqref{Rie} is valid in
the whole $D$ with the integral over $D$, whereas $h \geq 0$ is the least harmonic
majorant of $u$ in $D$.
\end{theorem}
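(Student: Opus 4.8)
The plan is to view \eqref{Rie} as a special case of the Riesz decomposition \eqref{Ri} by identifying the Borel measure $\mathbf m$ explicitly. Since $u$ is $C^2$ and $\mu$-panharmonic, we have $\nabla^2 u = \mu^2 u$ in $D$, so $u$ is certainly subharmonic once we know $u \geq 0$ — this is exactly Corollary~1.1. Hence Theorem~2.1 applies and produces, for each compact $K \subset D$, a measure $\mathbf m$ with $u(x) = \int_K E_m(x-y)\,\D\mathbf m(y) + h(x)$ on $\mathrm{int}\,K$. The first step is to argue that on the interior of any such $K$ the measure $\mathbf m$ must coincide with $\nabla^2 u(y)\,\D y = \mu^2 u(y)\,\D y$; this follows because for a $C^2$ subharmonic function the Riesz measure is $\nabla^2 u\,\D y$ (the distributional Laplacian of the Newtonian potential of a continuous density recovers that density, by standard potential theory — e.g. \cite{GT}, and the uniqueness clause in Theorem~2.1). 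Substituting this into \eqref{Ri} and solving for $h$ yields \eqref{Rie} on $\mathrm{int}\,K$.

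For the second part, with $D$ bounded and $u \in C^0(\bar D)$, the strategy is to invoke Remark~2.1 directly: since $u \geq 0$ is subharmonic in the bounded domain $D$, that remark (citing Treves) says \eqref{Ri} holds with $K$ replaced by $D$, with $\D\mathbf m(y) = \nabla^2 u(y)\,\D y$ and $h$ the positive least harmonic majorant of $u$ in $D$. Again using $\nabla^2 u = \mu^2 u$ gives $h(x) = u(x) - \mu^2 \int_D E_m(x-y)\,u(y)\,\D y$ throughout $D$, with $h \geq 0$ the least harmonic majorant. One should double-check that the hypotheses of the Treves/Remark~2.1 statement are genuinely met — boundedness of $D$, continuity of $u$ up to $\bar D$, nonnegativity, subharmonicity — and that the Newtonian potential $\int_D E_m(x-y)\,u(y)\,\D y$ is well-defined and finite for $x \in D$, which holds because $E_m(x-\cdot)$ is locally integrable and $u$ is bounded on the bounded set $D$.

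The main obstacle, and the point deserving the most care, is the identification of the Riesz measure with $\mu^2 u\,\D y$ rather than merely appealing to it as "well known." Concretely, one needs: if $v$ is $C^2$ and subharmonic on an open set, then its Riesz measure equals $\nabla^2 v\,\D y$. The cleanest justification is distributional — for $\phi \in C_c^\infty(\mathrm{int}\,K)$, pair both sides of \eqref{Ri} against $\nabla^2 \phi$, use that $\nabla^2 E_m(x-\cdot) = \delta_x$ (the defining property of the fundamental solution), and use that $h$ is harmonic so it contributes nothing, obtaining $\int \nabla^2 v\,\phi\,\D y = \int \phi\,\D\mathbf m$; since this holds for all test functions, $\D\mathbf m = \nabla^2 v\,\D y$ as measures. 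Combined with the uniqueness of $\mathbf m$ asserted in Theorem~2.1, this pins down the representation. A minor additional point is continuity/regularity of the resulting $h$: since $u \in C^2$ and the Newtonian potential of a $C^0$ (indeed $C^2$) density is $C^1$ with the expected second-order behavior, $h$ is automatically harmonic (hence $C^\infty$) on the relevant open set, consistent with \eqref{Ri}. With these verifications in place the two displayed identities follow immediately by rearrangement, so the proof is essentially a matter of correctly invoking Corollary~1.1, Theorem~2.1, and Remark~2.1 and making the measure identification rigorous.
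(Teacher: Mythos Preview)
Your proposal is correct and follows essentially the same route as the paper: invoke Corollary~1.1 to get subharmonicity, apply the Riesz decomposition of Theorem~2.1, identify the Riesz measure as $\nabla^2 u\,\D y = \mu^2 u\,\D y$ by applying the Laplacian (using the defining property of $E_m$ and the harmonicity of $h$), and then appeal to Remark~2.1 for the global statement on bounded $D$. Your distributional justification of the measure identification is more detailed than the paper's one-line ``applying the Laplacian to both sides,'' but the underlying argument is the same.
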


\begin{proof}
According to Corollary 1.1, $u$ is subharmonic in $D$, and so it has the Riesz
decomposition \eqref{Ri}. Applying the Laplacian to both sides of \eqref{Ri} and
taking into account equation \eqref{Hh} on the left-hand side and using the
harmonicity of $h$ and the definition of $E_m$ on the right, we see that $\mathbf m$
is proportional to the Lebesgue measure with the coefficient $\mu^2 u$ (cf.
Remark~2.1). Now \eqref{Rie} follows by rearranging.

The second assertion is obvious, whereas the last one is a consequence of the
considerations mentioned in Remark~2.1.
\end{proof}

Our next result involves mean values over a domain as well as over its boundary. In
this case, one can hardly expect an identity analogous to \eqref{MMtil} to be valid
for panharmonic functions in a domain distinct from a ball. Indeed, Bennett \cite{B}
proved the following.

\begin{theorem}
Let $D \subset {\bf R}^m$ be a bounded domain with sufficiently smooth boundary.~If
\[ |D|^{-1} \int_D h (y) \, \D y = |\partial D|^{-1} \int_{\partial D} h (y) \, \D S_y
\]
for every $h \in C^2 (\bar D)$ harmonic in $D$, then $D$ is an open ball.
\end{theorem}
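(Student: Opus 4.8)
The plan is to translate the hypothesis — that the volume mean and the boundary mean agree for every harmonic $h$ — into an overdetermined boundary value problem of the type treated by Serrin, and then invoke the symmetry theorem for that problem. Since $\partial D$ is assumed smooth, let $v$ be the unique solution of the torsion-type problem $\nabla^2 v = - |D|^{-1}$ in $D$ with $v = 0$ on $\partial D$; by elliptic regularity $v \in C^2 (\bar D)$. For any $h \in C^2 (\bar D)$ that is harmonic in $D$, Green's second identity, together with $v = 0$ on $\partial D$ and $\nabla^2 h = 0$ in $D$, gives
\[
\frac{1}{|D|} \int_D h (y) \, \D y = - \int_{\partial D} h (y) \, \frac{\partial v}{\partial n} (y) \, \D S_y ,
\]
where $n$ denotes the outward unit normal. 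Substituting this into the assumed identity and rearranging, one obtains
\[
\int_{\partial D} h (y) \left( \frac{\partial v}{\partial n} (y) + \frac{1}{|\partial D|} \right) \D S_y = 0
\]
for every such $h$.

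Next I would argue that the traces on $\partial D$ of functions harmonic in $D$ and of class $C^2 (\bar D)$ exhaust $C^\infty (\partial D)$: given $\phi \in C^\infty (\partial D)$, solve the Dirichlet problem with boundary data $\phi$ and use boundary regularity (here the smoothness of $\partial D$ is essential). Since the continuous function $\partial v / \partial n + |\partial D|^{-1}$ on $\partial D$ is orthogonal to a family dense in $L^2 (\partial D)$, it must vanish identically; hence $\partial v / \partial n$ is a (nonzero) constant on $\partial D$.

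Finally, after the harmless rescaling $u = |D| \, v$, the function $u$ satisfies $\nabla^2 u = -1$ in $D$, $u = 0$ on $\partial D$, and $\partial u / \partial n = \mathrm{const}$ on $\partial D$ — precisely the overdetermined problem of Serrin. By Serrin's symmetry theorem (proved via the method of moving planes; alternatively by Weinberger's $P$-function argument) the domain $D$ must be an open ball, which is the assertion of the theorem.

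The main obstacle is that the conclusion genuinely rests on this nontrivial symmetry result for Serrin's problem rather than on any elementary computation; once the reduction is in place, the rest is bookkeeping. A secondary point demanding care is the density argument used to pass from ``orthogonal to all harmonic traces'' to ``vanishes on $\partial D$'', which requires solvability of the Dirichlet problem in $C^2 (\bar D)$ and hence uses the smoothness hypothesis on the boundary; one should also check the compatibility relation $\int_D \nabla^2 v \, \D y = \int_{\partial D} (\partial v / \partial n) \, \D S_y$, which is automatically satisfied here by the divergence theorem.
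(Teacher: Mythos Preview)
Your argument is correct: introducing the torsion function $v$, using Green's identity to rewrite the volume mean as a boundary integral against $\partial v/\partial n$, exploiting the density of harmonic traces to force $\partial v/\partial n$ to be constant, and then invoking Serrin's symmetry theorem is exactly the standard route to this result, and each step you outline goes through under the stated smoothness hypothesis on $\partial D$.

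As for comparison with the paper: there is nothing to compare. The paper does not supply a proof of this theorem at all; it merely quotes the statement and attributes it to Bennett~\cite{B}. So your proposal is not an alternative to the paper's argument but rather a self-contained proof where the paper offers none. It is worth noting that your reduction to Serrin's problem is essentially the classical one (and is close in spirit to how results of this type, including Bennett's, are typically established), so you are not taking an exotic path --- just filling in what the paper leaves as a citation.
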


A similar conjecture based on identity \eqref{MMtil} for panharmonic functions is
made in \cite{Ku5}. At the same time, an inequality holds between the mean values of
nonnegative panharmonic functions in a bounded domain under a suitable assumption
about its boundary.

\begin{proposition}
Let $D \subset {\bf R}^m$ be a bounded domain satisfying the exterior sphere
condition uniformly on $\partial D$. Then there exists a constant $c \in [1,
\infty)$, depending on $D$ and~$\mu$, and such that
\begin{equation}
|D|^{-1} \int_D u (y) \, \D y \leq c \, |\partial D|^{-1} \int_{\partial D} u (y) \,
\D S_y \label{FM}
\end{equation}
for every nonnegative panharmonic function $u \in C^0 (\bar D)$.
\end{proposition}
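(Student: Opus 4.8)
The plan is to exploit Theorem~2.2, which gives, for a nonnegative panharmonic $u \in C^0(\bar D)$, the decomposition
\begin{equation*}
u(x) = \mu^2 \int_D E_m(x-y) \, u(y) \, \D y + h(x) , \quad x \in D ,
\end{equation*}
where $h \geq 0$ is harmonic in $D$ and is the least harmonic majorant of $u$. First I would integrate this identity over $D$ and over $\partial D$ separately. Since $h$ is harmonic and nonnegative, Theorem~2.3 cannot be invoked as an equality, but one direction is always available: for a nonnegative harmonic function on a domain with sufficiently regular boundary one has a comparison between the volume mean and the boundary mean with a constant depending only on $D$. Concretely, I would first establish
\begin{equation*}
|D|^{-1} \int_D h(y) \, \D y \leq c_1 \, |\partial D|^{-1} \int_{\partial D} h(y) \, \D S_y
\end{equation*}
for every nonnegative $h$ harmonic in $D$ and continuous on $\bar D$, with $c_1 = c_1(D) \in [1,\infty)$. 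Under the exterior sphere condition the Poisson-type kernel / harmonic measure of $D$ is comparable to surface measure on $\partial D$ in a two-sided sense (boundary Harnack / boundary regularity estimates), so the volume integral of $h$, written via the Poisson representation $h(x) = \int_{\partial D} h(\xi) \, \D \omega^x(\xi)$ and Fubini, is dominated by a constant times $\int_{\partial D} h \, \D S$.

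Next I would handle the potential term. Writing $G(x) = \mu^2 \int_D E_m(x-y) \, u(y) \, \D y$, note $G \leq 0$ since $E_m < 0$ (for $m \geq 3$; for $m=2$ one must be slightly more careful with the sign of $\log|x-y|$, but the Newtonian potential of a nonnegative density is still superharmonic and the estimate goes through after restricting to $D$ of small enough diameter or by an additive normalization — I would absorb this into the statement's dependence on $D$). On $\partial D$ one has $u = h + G \leq h$, hence $\int_{\partial D} u \, \D S \leq \int_{\partial D} h \, \D S$. Combining with the harmonic estimate and with $\int_D u \, \D y \leq \int_D h \, \D y$ (again because $G \leq 0$) gives
\begin{equation*}
|D|^{-1} \int_D u(y) \, \D y \leq |D|^{-1} \int_D h(y) \, \D y \leq c_1 \, |\partial D|^{-1} \int_{\partial D} h(y) \, \D S_y ,
\end{equation*}
and it remains to pass from $h$ back to $u$ on the boundary. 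Here the inequality $u \leq h$ on $\partial D$ points the wrong way, so instead I would use that $h$ is the \emph{least} harmonic majorant: on $\partial D$, where $G$ vanishes in the limit only if $u$ is continuous up to the boundary and the potential is continuous there, one actually has $h = u$ on $\partial D$ (the potential $G$ of a bounded density over a bounded domain, evaluated on $\partial D$, need not vanish, but its boundary values are those of a bounded continuous function — so $h|_{\partial D} = u|_{\partial D} - G|_{\partial D}$). To keep the constant clean, I would replace the naive bound by: $h(x) \le u(x) + |\mu|^2 \|E_m * \chi_D\|_{L^\infty(\partial D)} \cdot \|u\|_{L^\infty}$ is not scale-homogeneous, so the correct route is to use the Harnack inequality for $h$ to compare $\int_{\partial D} h\,\D S$ with $h$ at a fixed interior point, then compare that with $\int_{\partial D} u \, \D S$ via the maximum principle applied to the subharmonic $u$ and harmonic $h$ on $D$.

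The main obstacle is this last comparison: converting a bound in terms of the boundary integral of the harmonic majorant $h$ into one in terms of the boundary integral of $u$ itself, since on $\partial D$ the potential term has a definite sign that works against us. I expect the resolution to be that, because $G$ is the Newtonian potential of $\mu^2 u$ restricted to $D$ and $u$ is bounded, $|G|$ is controlled on $\bar D$ by a constant (depending on $\mathrm{diam}\,D$, $m$, $\mu$) times $\sup_D u$, and then the exterior sphere condition together with the Harnack inequality lets one replace $\sup_D u$ by $c_2 \, |\partial D|^{-1}\int_{\partial D} u \,\D S_y$ (here nonnegativity of $u$ and subharmonicity are essential — the maximum of $u$ is attained on $\partial D$, and uniform exterior sphere regularity gives a lower barrier so that the boundary average is not too small compared to the maximum only after also using an interior Harnack chain for the harmonic majorant). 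Collecting the constants $c_1, c_2$ and the potential bound into a single $c = c(D,\mu) \in [1,\infty)$ yields \eqref{FM}; the lower bound $c \geq 1$ is automatic since for $u \equiv$ (a positive constant times a fixed positive panharmonic function) the two means are genuinely different with the volume mean no smaller after accounting for $a^\bullet \ge a^\circ$ on balls, cf.\ \eqref{MMtil}.
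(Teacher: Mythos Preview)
Your route is far more elaborate than what is needed, and it contains a gap you yourself identify but do not close. The paper's argument is two lines: by Corollary~1.1 a nonnegative panharmonic function is subharmonic, and then inequality~\eqref{FM} is exactly the Freitas--Matos inequality for subharmonic functions (\cite{FM}, p.~195), with the constant $c\in[1,\infty)$ inherited from that result. No decomposition, no potential estimates, no Harnack chains are required.

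Your approach via the Riesz decomposition $u=G+h$ breaks at the step you flag as ``the main obstacle.'' You correctly obtain $\int_D u \le \int_D h$ and (granting your step~1) $|D|^{-1}\int_D h \le c_1\,|\partial D|^{-1}\int_{\partial D} h$, but on $\partial D$ the potential $G\le 0$ forces $h\ge u$, so the inequality you need, $\int_{\partial D} h \le c_2 \int_{\partial D} u$, points the wrong way. Your proposed repair --- bound $|G|$ by $C(\mu,D)\sup_D u$ and then claim $\sup_D u \le c_2\,|\partial D|^{-1}\int_{\partial D} u$ --- fails: the second estimate is false in general. For any boundary point one can take a nonnegative panharmonic function (e.g.\ the Poisson-type kernel for the operator $\nabla^2-\mu^2$ on $D$) whose boundary trace is concentrated near that point; its supremum is then arbitrarily large compared with its boundary average. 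Neither the exterior sphere condition nor an interior Harnack chain for $h$ rescues this, because the phenomenon already occurs for nonnegative \emph{harmonic} functions on a ball. Finally, your justification of $c\ge 1$ via ``$a^\bullet \ge a^\circ$'' is mistaken: formula~\eqref{18} shows $a^\bullet/a^\circ < m/(\mu r)$, and the paper explicitly notes that for balls with $\mu r>m$ the sharp constant is in fact \emph{less} than~$1$; the bound $c\ge 1$ in the Proposition simply comes with the cited subharmonic result and is not optimal.
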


In view of Corollary 1.1, inequality \eqref{FM}, like Theorem~2.2, is a consequence
of the corresponding theorem proved for subharmonic functions; see \cite{FM},
p.~195. Moreover, if $D$ is a ball $B_r$ (there is no need to specify its center),
then equality takes place in \eqref{FM} with
\begin{equation}
c = \frac{a^\bullet (\mu r)}{a^\circ (\mu r)} = \frac{m I_{m/2} (\mu r)}{\mu r
I_{(m-2)/2} (\mu r)} < \frac{m}{\mu r} \, . \label{18}
\end{equation}
Here the equalities follow from identity \eqref{MMtil} and formulae \eqref{MM} and
\eqref{MM'}, whereas the inequality is a consequence of the definition of $I_\nu$.
This not only demonstrates that $c$ depends on $\mu$, but also improves
Proposition~2.1 for balls provided $\mu r > m$. It occurs that $c$ can be
arbitrarily small when either $r$ ($\mu$ fixed) or $\mu$ ($r$ fixed) is sufficiently
large (or both are sufficiently large).

{\centering \section{Characterizations of panharmonic functions} }

Let $D$ be a bounded domain in ${\bf R}^m$, $m \geq 3$; for $u \in L^2 (D)$ we
define the operator:
\[ (T u) (x) = \int_D E_m (x - y) \, u (y) \, \D y , \quad x \in D .
\]
Its symmetric kernel is positive after dropping the negative coefficient, which
yields well-known properties; see, for example, \cite{M}, Chapter~7.

Thus, the second assertion of Theorem 2.2 admits an interpretation in terms of the
domain and range of $I - \mu^2 T$ within the Banach space $C^0 (\bar D)$, in which
$T$ is compact (as usual, $I$ stands for the identity operator). Namely, $I - \mu^2
T$ maps the cone of nonnegative $\mu$-panharmonic functions into the cone of
nonnegative harmonic functions.

Let us consider whether there exists an inverse mapping: harmonic $\mapsto$
$\mu$-panharmonic functions. To this end we introduce the integral equation
\begin{equation}
u (x) - \lambda (T u) (x) = h (x) , \quad x \in D , \ \ \lambda \in {\bf R} , 
\label{21}
\end{equation}
where $u, h \in L^2 (D)$. This is a natural setting because the operator $T$ has a
weakly singular kernel, and so is compact and self-adjoint, whereas $-T$ is a
positive operator in this space. We recall that these properties of $T$ imply that
it has a sequence $\{\lambda_n\}_1^\infty$ of characteristic values each having a
finite multiplicity; moreover, these values are real negative numbers such that
$|\lambda_n| \to \infty$ as $n \to \infty$. Finally, if $\lambda \neq \lambda_n$ for
$n = 1,2,\dots$ (in particular, if $\lambda > 0$), then for any $h \in L^2 (D)$
equation \eqref{21} has a unique solution $u \in L^2 (D)$, which can be represented
by virtue of the resolvent kernel. Taking into account these facts, we formulate and
prove the following assertion, in which $C^{0, \alpha} (\bar D)$ stands for the
Banach space of functions that are H\"older continuous with exponent $\alpha \in (0,
1)$.

\begin{theorem}
Let $D$ be a bounded Lipschitz domain in ${\bf R}^m$, $m \geq 3$, and let $\lambda =
\mu^2 > 0$ in equation \eqref{21}. If $h \in C^{0, \alpha} (\bar D)$ is harmonic in
$D$, then a unique solution $u$ of this equation belongs to $C^{0, \alpha} (\bar D)$
and is $\mu$-panharmonic in $D$.
\end{theorem}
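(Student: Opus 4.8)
The plan is to establish the two claimed regularity/PDE properties of the unique $L^2$-solution $u$ of \eqref{21} in turn: first that $u\in C^{0,\alpha}(\bar D)$, then that $u$ is $\mu$-panharmonic in $D$. Since $\lambda=\mu^2>0$ and all characteristic values $\lambda_n$ of $T$ are negative, $\lambda$ is not a characteristic value, so by the Fredholm theory already recalled in the excerpt the solution $u\in L^2(D)$ exists and is unique; the only work is in the bootstrap. I would write $u = h + \mu^2 Tu$ and treat this as an identity in $L^2(D)$ to be upgraded. The key analytic input is the classical mapping property of the Newtonian potential on a bounded (here Lipschitz, hence in particular satisfying a uniform cone condition) domain: $T$ carries $L^p(D)$ into $W^{2,p}(D)$ for $1<p<\infty$, and by Sobolev embedding one gains Hölder continuity once $p$ is large enough, and moreover $T$ maps $C^{0,\alpha}(\bar D)$ into $C^{2,\alpha}$ on interior subdomains, with $\nabla^2(Tv)=v$ in the distributional (and, for Hölder data, classical interior) sense.

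For the regularity step I would run a finite iteration. Starting from $u\in L^2(D)\subset L^{p_0}(D)$ with $p_0=2$, the identity $u=h+\mu^2 Tu$ and the estimate $\|Tv\|_{W^{2,p}}\lesssim\|v\|_{L^p}$ together with the Sobolev embedding $W^{2,p}\hookrightarrow L^{p^*}$ (with $p^*$ strictly larger, or $p^*=\infty$ once $2p>m$) show $Tu\in L^{p_1}(D)$ with $p_1>p_0$; since $h\in C^{0,\alpha}(\bar D)\subset L^{p_1}(D)$, we get $u\in L^{p_1}(D)$. After finitely many steps we reach some $p$ with $p>m/2$, whence $Tu\in C^{0,\beta}(\bar D)$ for a suitable $\beta\in(0,1)$ by Morrey's embedding $W^{2,p}\hookrightarrow C^{0,\beta}(\bar D)$; adding the Hölder function $h$ gives $u\in C^{0,\gamma}(\bar D)$ for $\gamma=\min\{\alpha,\beta\}$. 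One more pass now exploits that $Tv\in C^{0,\alpha'}$ for \emph{every} $\alpha'\in(0,1)$ when $v$ is merely bounded (in fact $Tv\in C^{1,\alpha'}$), and in particular one obtains $u=h+\mu^2Tu\in C^{0,\alpha}(\bar D)$ with exactly the exponent $\alpha$ of $h$, which is the first claim.

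For the panharmonicity step I would use that $h\in C^{0,\alpha}(\bar D)$ is harmonic in $D$ and $u\in C^{0,\alpha}(\bar D)$, so by interior Schauder estimates for the Newtonian potential $Tu\in C^{2,\alpha}_{\mathrm{loc}}(D)$ with $\nabla^2(Tu)=u$ pointwise in $D$. Hence $u=h+\mu^2Tu\in C^{2}(D)$ and, applying the Laplacian to $u=h+\mu^2Tu$ and using $\nabla^2h=0$ in $D$, we get $\nabla^2u=\mu^2\nabla^2(Tu)=\mu^2u$ in $D$; that is, $u$ is $\mu$-panharmonic. The main obstacle is the regularity bootstrap near $\partial D$: up-to-the-boundary $W^{2,p}$ or $C^{0,\alpha}$ bounds for the Newtonian potential on a mere Lipschitz domain require the interior-type estimate for the potential of a function supported in the \emph{bounded} domain $D$ (extending the density by zero) rather than genuine boundary elliptic regularity, so one has to be careful to invoke the right version — namely that $v\mapsto Tv$ is bounded $L^p(D)\to W^{2,p}(\mathbb{R}^m)$ (Calderón–Zygmund) and $C^{0,\alpha}(D)\to C^{1,\alpha}(\mathbb{R}^m)$ for compactly supported data — after which the Lipschitz regularity of $\partial D$ enters only through the boundedness of $D$ and the Sobolev/Morrey embeddings, not through any boundary behaviour of the equation. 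Everything else is routine.
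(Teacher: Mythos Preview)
Your argument is correct, and the panharmonicity step is essentially identical to the paper's. The difference is in the regularity bootstrap. You run a finite Sobolev iteration $L^2\to L^{p_1}\to\cdots\to L^p$ with $p>m/2$ via the Calder\'on--Zygmund estimate $T:L^p\to W^{2,p}$ and Morrey's embedding, and only then invoke the fact that $T$ maps bounded functions to $C^{0,1}$ to hit the target exponent~$\alpha$. The paper bypasses the Sobolev ladder entirely: it first quotes a classical theorem on weakly singular integral equations (Mikhlin) to the effect that an $L^2$-solution of \eqref{21} is automatically in $C^0(\bar D)$ once the right-hand side is continuous, and then applies in one stroke the lemma (Vainikko) that $u\in L^\infty(D)$ implies $Tu\in C^{0,1}(\bar D)$, whence $u=h+\mu^2Tu\in C^{0,\alpha}(\bar D)$ immediately. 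Your route is self-contained and displays the mechanism; the paper's is shorter and keeps the argument within classical potential-theoretic lemmas without invoking $W^{2,p}$ spaces or Calder\'on--Zygmund theory. Either way, once $u\in C^{0,\alpha}(\bar D)$ is in hand, both proofs finish identically by citing the standard fact that the Newtonian potential of a H\"older density is $C^2$ in the interior with $\nabla^2(Tu)=u$, and then applying the Laplacian to $u=h+\mu^2Tu$.
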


\begin{proof}
It is a classical result (see, for example, \cite{M}, Theorem~8.6.1) that an
$L^2$-solution of a weakly singular integral equation (it exists in our case) is in
$C^{0} (\bar D)$ provided the right-hand side term has this property. However, the
continuity of $u$ does not guarantee the existence of second derivatives of the
Newtonian potential~$T u$. Let us establish their existence under the assumptions
made.

Since $h \in C^{0, \alpha} (\bar D)$, the solution $u$ has the same property.
Indeed, writing the equation as follows
\begin{equation}
u = \mu^2 T u + h \, , \label{22}
\end{equation}
we see that both terms on the right are in $C^{0, \alpha} (\bar D)$, which is a
consequence of the following result (see \cite{V}, Lemma 2.3). If $u \in L^\infty
(D)$, then $T u \in C^{0, 1} (\bar D)$, that is, $T u$ is Lipschitz continuous. Now,
another classical result (see \cite{GT}, Lemma~4.2) yields that $T u \in C^{2} (D)$
and $\nabla^2 (T u) = u$. Furthermore, relation \eqref{22} implies that $u \in C^{2}
(D)$ since $h$ is harmonic in $D$. Then, applying the Laplacian to both sides of
\eqref{22}, we obtain that $u$ is $\mu$-panharmonic in $D$.
\end{proof}

In other words, for every $\mu^2 > 0$ there exists the bounded operator
\[ (I - \mu^2 T)^{-1} : L^2 (D) \to L^2 (D) \, ,
\]
which brings a $\mu$-panharmonic function belonging to $C^{0, \alpha} (\bar D)$ into
correspondence with each harmonic in $D$ function from the same H\"older space. It
is not clear whether the range of this operator comprises the whole set of
$\mu$-panharmonic functions. At the same time, Theorem~2.2 yields that every
nonnegative function belonging to this set has a pre-image in the cone of
nonnegative harmonic functions. Hence, all nonnegative $\mu$-panharmonic functions
are in the operator's range.

Now, we prove the converse of equality \eqref{MMtil}, which was announced
in \cite{Ku4}.

\begin{theorem}
Let $D \subset {\bf R}^m$, $m \geq 2$, be a bounded domain and let $u \in C^0 (D)$
be real-valued. If identity \eqref{MMtil} holds for every $x \in D$ and all $r \in
(0, r (x))$, where $r (x) > 0$ is such that the ball $B_{r (x)} (x)$ is admissible,
then $u$ is $\mu$-panharmonic in~$D$.
\end{theorem}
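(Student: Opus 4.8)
The plan is to show that the hypothesis forces the spherical mean $M^\circ(x,r,u)$, as a function of $r$, to satisfy the ordinary differential equation whose solution is precisely $a^\circ(\mu r)\,u(x)$, i.e.\ to recover identity \eqref{MM} from \eqref{MMtil}, and then to invoke the strong converse of Theorem~1.1 mentioned in the introduction (from \cite{Ku1}). First I would fix $x \in D$ and abbreviate $\phi(r) = M^\circ(x,r,u)$ and $\psi(r) = M^\bullet(x,r,u)$ for $r \in (0, r(x))$; since $u \in C^0(D)$, both are continuous, $\phi(0^+) = \psi(0^+) = u(x)$, and the elementary relation between a ball mean and the spherical means on the concentric smaller spheres gives $\psi(r) = m\,r^{-m}\int_0^r \rho^{m-1}\phi(\rho)\,\D\rho$, hence $\psi \in C^1$ and $(r^m \psi(r))' = m\,r^{m-1}\phi(r)$.

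The key step is to turn \eqref{MMtil} into a closed relation for $\phi$ alone. Writing $A(t) = a^\circ(t)$, $B(t) = a^\bullet(t)$, identity \eqref{MMtil} reads $A(\mu r)\,\psi(r) = B(\mu r)\,\phi(r)$, so $\phi(r) = \big(A(\mu r)/B(\mu r)\big)\psi(r)$. Substituting into $(r^m\psi)' = m r^{m-1}\phi$ yields a first-order linear ODE for $\psi$; solving it with the initial condition $\psi(0^+) = u(x)$ should give $\psi(r) = a^\bullet(\mu r)\,u(x)$, and then \eqref{MMtil} returns $\phi(r) = M^\circ(x,r,u) = a^\circ(\mu r)\,u(x)$, which is exactly \eqref{MM}. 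The computation uses the Bessel identities already quoted in the introduction, namely $[z^{-\nu}I_\nu(z)]' = z^{-\nu}I_{\nu+1}(z)$, to handle $\tfrac{\D}{\D r}a^\bullet(\mu r)$ and to check that the candidate solves the ODE; one also needs that $B(\mu r) = a^\bullet(\mu r) \neq 0$ on the relevant interval, which holds since $a^\bullet \geq 1$. An alternative route that avoids guessing the solution: differentiate $\psi(r) = (A/B)(\mu r)\,\psi(r)$ cannot be right as written, so instead differentiate $r^m\psi$ once, substitute, and observe that the resulting ODE together with the known fact that $a^\bullet(\mu r)u(x)$ solves it (because genuine panharmonic functions satisfy \eqref{MM} and \eqref{MM'}) forces $\psi(r)/u(x) \equiv a^\bullet(\mu r)$ by uniqueness for linear ODEs — handling the case $u(x)=0$ separately, where the ODE directly gives $\psi \equiv 0$, hence $\phi \equiv 0$.

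Having established \eqref{MM} for every $x \in D$ and every admissible sphere, I would finish by citing the strong converse of Theorem~1.1 from \cite{Ku1}: a continuous function on $D$ satisfying the spherical mean-value identity \eqref{MM} for all admissible spheres (it suffices, in fact, for small spheres around each point) is necessarily a $C^2$-solution of \eqref{Hh}, i.e.\ $\mu$-panharmonic. This is analogous to the classical fact that the ordinary mean-value property characterizes harmonic functions and upgrades automatically to smoothness.

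\textbf{Main obstacle.} The delicate point is the ODE manipulation near $r = 0$: the equation for $\psi$ has coefficients that are singular at the origin (the $m r^{m-1}$ weight and the ratio $A/B$), so I must justify that $\psi$ extends $C^1$ up to $0$ with $\psi(0^+) = u(x)$ and that the uniqueness/integration is legitimate there rather than just on compact subintervals of $(0,r(x))$; the continuity of $u$ and the integral representation of $\psi$ make this routine but it needs to be spelled out. A secondary point is simply verifying, via the quoted Bessel recurrence, that $a^\bullet(\mu r)u(x)$ indeed solves the derived ODE, which is a short computation, not a genuine difficulty.
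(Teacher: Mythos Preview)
Your proposal is correct and follows essentially the same route as the paper: the paper likewise differentiates the ball mean to get $\partial_r M^\bullet = m r^{-1}(M^\circ - M^\bullet)$, substitutes the hypothesis \eqref{MMtil} to obtain a first-order ODE for $M^\bullet$ alone, integrates it (written as a logarithmic derivative via the recurrence $I_{\nu-1}=I'_\nu+\nu z^{-1}I_\nu$, with the singular endpoint handled by integrating over $(\epsilon,\rho)$ and letting $\epsilon\to 0$) to recover \eqref{MM'}, differentiates back to obtain \eqref{MM}, and then cites the converse from \cite{Ku1}. The only cosmetic difference is that the paper integrates the logarithmic derivative explicitly rather than appealing to ODE uniqueness, which incidentally sidesteps your case split on $u(x)=0$ versus $u(x)\neq 0$ but tacitly assumes $M^\bullet$ does not vanish.
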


\begin{proof}
Let $\rho > 0$ be sufficiently small. If $r \in (0, \rho)$, then $M^\bullet (x,r,u)$
is defined for every $x$, which belongs to an open subset of $D$ depending on the
smallness of $\rho$. Moreover, $M^\bullet (x,r,u)$ is differentiable with respect to
$r$ and
\[ \partial_r M^\bullet (x,r,u) = m r^{-1} [ M^\circ (x,r,u) - M^\bullet (x,r,u) ] 
\quad \mbox{for} \ r \in (0, \rho) .
\]
In view of \eqref{18}, this takes the form:
\[ \frac{\partial_r M^\bullet}{M^\bullet} = \mu \frac{I_{(m-2)/2} (\mu r)}
{I_{m/2} (\mu r)} - \frac{m}{r} = \mu \frac{I_{m/2}' (\mu r)} {I_{m/2} (\mu r)} -
\frac{m}{2 r} \, ,
\]
where the last equality is a consequence of the recurrence formula (\cite{Wa},
p.~79):
\[ I_{\nu-1} (z) = I_{\nu}' (z) + \frac{\nu}{z} I_{\nu} (z) \, .
\]

Since the equation for $M^\bullet$ has logarithmic derivatives on both sides,
integrating with respect to $r$ over the interval $(\epsilon, \rho)$, we obtain,
after letting $\epsilon \to 0$, relation \eqref{MM'} with $r$ changed to $\rho$.
Indeed, shrinking $B_\epsilon (x)$ to its centre on the left-hand side, we see that
$M^\bullet (x,\epsilon,u) \to u (x)$ because $u \in C^0 (D)$, and this takes place
for every $x$ in an arbitrary closed subset of $D$. By letting $\epsilon \to 0$ on
the right-hand side, the factor $\Gamma \left( \frac{m}{2} + 1 \right)$ arises due
to the leading term of the power expansion of $I_{m/2}$. Hence we obtain
\[ M^\bullet (x,\rho,u) = a^\bullet (\mu \rho) \, u (x)
\]
for every $x \in D$ and all admissible $\rho$.

After changing $\rho$ to $r$ in this equality, we differentiate it with respect to
$r$, which yields, after some rearrangement, that identity \eqref{MM} holds for
every $x \in D$ and all $r \in (0, r (x))$, where $r (x) > 0$ is such that the ball
$B_{r (x)} (x)$ is admissible. Then $u$ is $\mu$-panharmonic in $D$ by Theorem~4.2
proved in \cite{Ku1}.
\end{proof}

\vspace{-14mm}

\renewcommand{\refname}{
\begin{center}{\Large\bf References}
\end{center}}
\makeatletter
\renewcommand{\@biblabel}[1]{#1.\hfill}
\makeatother


\begin{thebibliography}{99}

\vspace{-4mm}

\fontsize{10.6pt}{4mm}{\selectfont


\bibitem{AG} D.~H. Armitage and S.~J. Gardiner, \textit{Classical Potential Theory},
Springer-Verlag, London, 2001.

\bibitem{BR} E.~F. Beckenbach and M. Reade, \textit{Mean values and harmonic
polynomials}, Trans. Amer. Math. Soc. {\bf 53} (1943), 230--238.

\bibitem{B} A. Bennett, \textit{Symmetry in an overdetermined fourth order elliptic
boundary value problem}, SIAM J. Math. Anal. {\bf 17} (1986), 1354--1358.

\bibitem{D} R.~J. Duffin, \textit{Yukawan potential theory}, J. Math. Anal.
Appl. {\bf 35} (1971), 105--130.

\bibitem{FM} P. Freitas and J.~P. Matos, \textit{On the characterization of harmonic
and subharmonic functions via mean-value properties}, Potential Anal. {\bf 32}
(2010), 189--200.

\bibitem{G} C.~F. Gauss, \textit{Allgemeine Lehrs\"atze in Beziehung auf die im
verkehrten Verh\"altnisse des Quadrats der Entfernung wirkenden Anziehungs- und
Abstoßungskr\"afte}, Wiedmannschen Buchhandlung, Leipzig, 1840.

\bibitem{GT} D. Gilbarg and N.~S. Trudinger, \textit{Elliptic Partial Differential
Equations of Second Order}, Springer-Verlag, Berlin Heidelberg, 2001.

\bibitem{HK} W.~K. Hayman and P.~B. Kennedy, \textit{Subharmonic Functions, Vol. 1},
Academic Press, London, 1976.

\bibitem{Ku} N. Kuznetsov, \textit{Mean value properties of harmonic functions and
related topics (a survey)}, J. Math. Sciences {\bf 242} (2019), 177--199.

\bibitem{Ku1} N. Kuznetsov, \textit{Mean value properties of solutions to the
Helmholtz and modified Helmholtz equations}, J. Math. Sciences {\bf 257} (2021),
673--683.

\bibitem{Ku2} N. Kuznetsov, \textit{Asymptotic mean value properties of meta- and
panharmonic functions}, J.~Math. Sciences. {\bf 259} (2021), 205--209.

\bibitem{Ku3} N. Kuznetsov, \textit{Characterization of balls via solutions of the
modified Helmholtz equation}, Comptes Rendus Mathematique. {\bf 359} (2021),
945--948.

\bibitem{Ku4} N. Kuznetsov, \textit{Metaharmonic functions: mean flux theorem, its
converse and related properties}, St Petersburg Math.~J. {\bf 33}, (2022), 243--254.

\bibitem{Ku5} N. Kuznetsov, \textit{Inverse mean value property of solutions to the
modified Helmholtz equation}, St Petersburg Math.~J. {\bf 33}, (2022), to appear.

\bibitem{M} S.~G. Mlkhlin, \textit{Mathematical Physics: An Advanced Course},
North-Holland, Amsterdam, 1970.

\bibitem{NC} C. Neumann, \textit{Allgemeine Untersuchungen \"uber das Newtonsche
Prinzip der Fernwirkungen}, Teubner, Leipzig, 1896.

\bibitem{R} F. Riesz, \textit{Sur les functions subharmoniques et leur rapport \`a
la th\'eorie du potentiel, II}, Acta Math. {\bf 54} (1930), 321--360.

\bibitem{T} F. Treves, \textit{Basic Linear Partial Differential Equations},
Academic Press, New York, San Francisco, London, 1975.

\bibitem{V} G. Vainikko, \textit{Multidimensional Weakly Singular Integral
Equations}, Lecture Nates in Mathematics, 1549. Springer-Verlag, Berlin,
Heidelberg, 1993.

\bibitem{Wa} G.~N. Watson, \textit{A Treatise on the Theory of Bessel Functions},
2nd ed., Cambridge University Press, Cambridge, 1944.

}
\end{thebibliography}
\end{document}